\documentclass[10pt]{amsart}

\usepackage{amsfonts}

\usepackage{setspace} 
%\setstretch{1.1}
\pagestyle{plain}

\usepackage{hyperref}
\usepackage{mathtools}

\usepackage{todonotes}
\usepackage{multirow}
\usepackage{longtable}

\usepackage{amsmath}
%\usepackage{amsfonts}
 % here - PS
\usepackage{amssymb}
\usepackage{float}
\usepackage{amsthm}
\usepackage{comment}

\usepackage{listings}
\usepackage{color}

\definecolor{dkgreen}{rgb}{0,0.6,0}
\definecolor{gray}{rgb}{0.5,0.5,0.5}
\definecolor{mauve}{rgb}{0.58,0,0.82}

\lstset{frame=tb,
%  language=Java,
  aboveskip=3mm,
  belowskip=3mm,
  showstringspaces=false,
  columns=flexible,
  basicstyle={\small\ttfamily},
  numbers=none,
  numberstyle=\tiny\color{gray},
  keywordstyle=\color{blue},
  commentstyle=\color{dkgreen},
  stringstyle=\color{mauve},
  breaklines=true,
  breakatwhitespace=true,
  tabsize=3
}

\usepackage{color}
\usepackage[all]{xy}
\usepackage{verbatim}
\usepackage{setspace}
\usepackage[mathscr]{eucal}
\usepackage{graphicx}
\usepackage{tikz-cd}

%\xyoption{all}
\usepackage{amsmath}
\usepackage{url}
\usepackage{verbatim}
%\urlstyle{rm}

\usepackage{xy}
\xyoption{all}

\usepackage{amsthm}

\DeclareMathOperator{\Hom}{Hom}

\DeclareMathOperator{\spec}{Spec}

\DeclareMathOperator{\im}{im}
\DeclareMathOperator{\Jac}{Jac}

\newtheorem{lemma}{Lemma}[section]

\newtheorem{theorem}[lemma]{Theorem}

\newtheorem{prop}[lemma]{Proposition}

\newtheorem{abc}{abc}
\newtheorem{letterthm}[abc]{Theorem}

\theoremstyle{definition}

\newtheorem{question}{Question}

\newcommand{\FF}{\mathbb{F}}
\newcommand{\Z}{\mathbb{Z}}
\newcommand{\QQ}{\mathbb{Q}}

\newcommand{\wt}[1]{\widetilde{#1}}

\newcommand{\co}{\mathcal{O}}
\newcommand{\PP}{\mathbf{P}}

\newcommand{\GG}{\mathbf{G}}

\newcommand{\cA}{\mathcal{A}}

\title{The Dieudonn\'{e} modules and Ekedahl-Oort types of Jacobians of
hyperelliptic curves in odd characteristic}
\author{Sanath Devalapurkar}
\address{Department of Mathematics, Massachusetts Institute of Technology, 77
Massachusetts Avenue, Cambridge, MA 02142}
\email{\href{mailto:sanathd@mit.edu}{sanathd@mit.edu}}
\author{John Halliday}
\address{Department of Mathematics, University of Chicago, 5801 S Ellis Ave,
Chicago, IL 60637}
\email{\href{mailto:jhalliday@uchicago.edu}{jhalliday@uchicago.edu}}
\date{\today}
\begin{document}
\maketitle
\begin{abstract}
    Given a principally polarized abelian variety $A$ of dimension $g$ over an
    algebraically closed field $k$ of characteristic $p$, the $p$ torsion
    $A[p]$ is a finite flat $p$-torsion group scheme of rank $p^{2g}$. There
    are exactly $2^g$ possible group schemes that can occur as some such
    $A[p]$. In this paper, we study which group schemes can occur as $J[p]$,
    where $J$ is the Jacobian of a hyperelliptic curve defined over
    $\mathbb{F}_p$. We do this by computing explicit formulae for the action of
    Frobenius and its dual on the de Rham cohomology of a hyperelliptic curve
    with respect to a given basis. A theorem of Oda's in \cite{oda} allows us
    to relate these actions to the $p$-torsion structure of the Jacobian. Using
    these formulae and the computer algebra system Magma, we affirmatively
    resolve questions of Glass and Pries in \cite{glass-pries} on whether
    certain group schemes of rank $p^8$ and $p^{10}$ can occur as $J[p]$ of a
    hyperelliptic curve of genus $4$ and $5$ respectively.
\end{abstract}

\section{Introduction}

Let $k$ be a field of characteristic $p$. Recall that an elliptic curve $E$ over $k$ is a $1$-dimensional group variety over $k$. We denote the $p$-torsion of $E$ as a group scheme by $E[p]$, and the $p$-torsion points of $E$ over $\overline{k}$ by $E(\overline{k})[p]$. Although in characteristic $0$ the $p$-torsion of an elliptic curve over an algebraically closed field is isomorphic to $(\mathbb{Z}/p)^2$, in characteristic $p$ we can either have $E(\overline{k})[p] = \mathbb{Z}/p$ or $E(\overline{k})[p] = \{0\}$ as abstract groups. The former condition is called ordinary, and happens for a generic elliptic curve. The latter is called supersingular. Now considering the $p$-torsion as a group scheme and not merely a group of points, we have that if $E$ is ordinary then we have $E[p] = \Z/p\Z \oplus \mu_p$
(where $\Z/p\Z$ is the $p$-torsion of the $p$- divisible group $\QQ_p/\Z_p$,
and $\mu_p$ is the $p$-torsion of the multiplicative group scheme $\GG_m$). On
the other hand, if $E$ is a supersingular elliptic curve, the $p$-torsion
$E[p]$ sits in a nonsplit short exact sequence
$$0 \to \alpha_p \to E[p] \to \alpha_p \to 0,$$
where $\alpha_p \simeq \spec k[x]/x^p$ is the kernel of the Frobenius on the
additive group $\GG_a$.\\

\noindent For principally polarized abelian varieties of dimension greater than $1$, this generalizes in an especially rich way. Let $A$ be a principally
polarized abelian variety over $k$ of dimension $g$; this is a $k$-point of the
moduli space $\cA_g$ of principally polarized abelian varieties of dimension
$g$. Since $A$ is an abelian variety, it admits a multiplication-by-$p$ map
$[p]\colon A\to A$, which factors as the composite $V\circ F$. Here $F\colon A
\to A^{(p)}$ is the relative Frobenius, and $V$, called the Verschiebung, is
the dual of $F$. (There are identifications $\ker F = \im V$ and $\ker V = \im
F$, owing to the principal polarization on $A$.) Note that we also have $[p] =
F\circ V$. The $p$-torsion of $A$ is denoted $A[p]$.\\

\noindent The morphism $[p]$ is proper and flat of degree $p^{2g}$, so $A[p]$
is a $p$-torsion group scheme of rank $p^{2g}$, with induced morphisms $F$ and
$V$. The scheme structure of $A[p]$ is interesting, especially in dimensions
greater than $1$.

\noindent This motivates one to consider certain invariants of $A[p]$. The
\emph{$p$-rank} $f$ of $A$ is defined to be $\dim_{\FF_p} \Hom(\mu_p, A[p])$,
while the \emph{$a$-number} $a$ of $A$ is defined to be $\dim_{k}
\Hom(\alpha_p, A[p])$. It follows that the $p$-rank is the integer $f$ such
that $A[p](k) \simeq (\Z/p\Z)^f$, so that $p^f = \# A[p](k)$. This implies that
$$0\leq f\leq g.$$
Moreover, it is known that
$$0\leq a + f \leq g.$$

\noindent In analogy to the case of dimension $1$, we say that $A$ is
\emph{ordinary} if $f = g$. This implies that $a = 0$, and that $A[p] \simeq
(\Z/p\Z\oplus \mu_p)^f$. Most abelian varieties are ordinary, in that the
ordinary abelian varieties comprise an open subscheme of $\mathcal{A}_g$. At
the opposite end, we have \emph{superspecial} abelian varieties, which have
$p$-rank $0$ and $a$-number $g$. In this case, $A[p] = G^g$ where $G$ sits in the nonsplit exact sequence $$0 \to \alpha_p \to G \to \alpha_p \to 0.$$
\noindent In fact such an abelian variety is isogenous to $E^g$ where $E$ is a supersingular elliptic curve, so such abelian varieties are very rare.

\noindent In general, one can get a concrete algebraic handle on $A[p]$ by
describing its \emph{Dieudonn\'e module}. The scheme $A[p]$ is a
Barsotti--Tate group scheme (also known as a $\mathrm{BT}_1$ group scheme);
this means that it arises as the $p$-torsion of a $p$-divisible group over $k$.
While this is a rather large and complicated category to get a handle on, the
main theorem of Dieudonn\'e theory lets us linearize the problem.
\begin{theorem}\label{equiv}
	There is an equivalence of categories between $\mathrm{BT}_1$ group
	schemes and $k$-vector spaces with maps $F$ and $V$ such that $FV = VF
	= 0$, $\im F = \ker V$, and $\im V = \ker F$. This latter category is
	equivalent to finite (left) modules over the Cartier--Dieudonn\'e ring
	$$\mathrm{Cart}_p \coloneqq k[F,V]/(FV = VF = 0, Fx = x^p F, Vx^p =
	xV).$$
\end{theorem}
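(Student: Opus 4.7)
I would prove the theorem in two stages, corresponding to the two equivalences stated. For the second (formal) equivalence, I would unwind the definition of a left $\mathrm{Cart}_p$-module: this amounts to a $k$-vector space $M$ with two additive endomorphisms $F$, $V$ satisfying $FV = VF = 0$, where the ring relations $Fx = x^p F$ and $Vx^p = xV$ encode that $F$ is Frobenius-semilinear and $V$ is Verschiebung-semilinear (so this is implicitly the notion of ``maps'' meant in the middle category of the theorem). Finite-dimensionality of $M$ over $k$ corresponds to finiteness as a $\mathrm{Cart}_p$-module, and the extra conditions $\im F = \ker V$ and $\im V = \ker F$ cut out a full subcategory. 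So this equivalence is a direct unwinding of definitions, once one agrees on the semilinearity convention.

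For the main equivalence, I would invoke the contravariant Dieudonn\'{e} functor $\mathbb{D}$ from finite commutative $p$-primary group schemes over $k$ to finite-length $\mathrm{Cart}_p$-modules, defined by $\mathbb{D}(G) = \Hom_{k\text{-gp}}(G, CW_k)$ where $CW_k$ is the $k$-group of Witt covectors. The Frobenius and Verschiebung on $CW_k$ supply the operators $V$ and $F$ on $\mathbb{D}(G)$. The classical theorem of Dieudonn\'{e} (as developed, for example, in Fontaine's work or in Berthelot--Breen--Messing) asserts that $\mathbb{D}$ is an anti-equivalence between these categories. Granting this, the theorem reduces to identifying the full subcategory of $\mathrm{BT}_1$ group schemes with the full subcategory of modules satisfying $\im F = \ker V$ and $\im V = \ker F$.

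The main step, and the one I expect to be the principal obstacle, is establishing this last identification. In one direction, suppose $G = H[p]$ is the $p$-torsion of a $p$-divisible group $H$. Then $\mathbb{D}(G) = \mathbb{D}(H)/p\,\mathbb{D}(H)$, and the factorization $[p] = FV = VF$ on $H$ combined with $W(k)$-flatness of $\mathbb{D}(H)$ forces the image/kernel equalities after reduction mod $p$. In the other direction, given a finite-dimensional $M$ satisfying these conditions, one needs to produce a $p$-divisible group $H$ over $k$ whose $p$-torsion corresponds to $M$; this is done by lifting $M$ to a $W(k)$-flat Dieudonn\'{e} module $\widetilde{M}$ and appealing to the Dieudonn\'{e} equivalence for $p$-divisible groups. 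The delicate step is showing such a lift exists without obstruction, and it is precisely the image/kernel conditions that supply the compatible $W(k)$-flat structure needed to carry this out.
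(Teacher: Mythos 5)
The paper does not prove this statement at all: it is quoted as the classical main theorem of Dieudonn\'e theory and used as a black box, so there is no in-paper argument to compare against. Judged on its own terms, your outline is the standard one, and the routine parts are fine: unwinding $\mathrm{Cart}_p$-modules as $k$-vector spaces with a $\sigma$-semilinear $F$ and $\sigma^{-1}$-semilinear $V$ is indeed just a matter of definitions, and your forward direction is correct --- on $\mathbb{D}(H)$ one has $FV=VF=p$ with $\mathbb{D}(H)$ free over $W(k)$, so $F$ and $V$ are injective, and $Vm\in p\mathbb{D}(H)=VF\mathbb{D}(H)$ forces $m\in F\mathbb{D}(H)+p\mathbb{D}(H)$, giving $\ker\bar V=\im\bar F$ and symmetrically $\ker\bar F=\im\bar V$ after reduction mod $p$. (One small point worth flagging: the classical functor $G\mapsto\Hom(G,CW_k)$ is contravariant, so you get an anti-equivalence; this is harmless here because the target category is anti-equivalent to itself via $M\mapsto M^\vee$ with $F$ and $V$ interchanged, but the statement as an ``equivalence'' deserves that one remark or a switch to the covariant theory.)

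The genuine issue is that the step you yourself call ``delicate'' is the entire mathematical content of the theorem, and your proposed resolution --- ``lift $M$ to a $W(k)$-flat Dieudonn\'e module $\widetilde M$'' --- is as stated a restatement of the problem rather than an argument: you must produce a $W(k)$-free module with semilinear $F$ and $V$ satisfying $FV=VF=p$ that reduces to $M$, and nothing in the conditions $\im F=\ker V$, $\im V=\ker F$ hands you such a lift for free. Two standard ways to close this gap: (i) invoke the Kraft--Oort classification, which writes every $M$ satisfying these conditions (over an algebraically closed, or after descent a perfect, field) as a direct sum of modules attached to cyclic words in $F$ and $V^{-1}$, each of which visibly lifts to a $W(k)$-lattice with $FV=VF=p$; or (ii) bypass the module-lifting entirely and cite Illusie's theorem (Grothendieck's conjecture) that a finite flat group scheme killed by $p$ with $\im F_G=\ker V_G$ and $\im V_G=\ker F_G$ is the $p$-torsion of a $p$-divisible group, which reduces the backward direction to the exactness of $\mathbb{D}$. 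You should name one of these inputs explicitly; without it the proof of the essential surjectivity direction is not complete.
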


\noindent The \emph{Dieudonn\'e module} of an abelian variety $A$ over $k$ is
the image of $A[p]$ under the equivalence of categories in Theorem \ref{equiv};
this module will be denoted $D(A)$.

\noindent In this paper, we study which $D(A)$ can occur for $A$ the Jacobian
of a hyperelliptic curve $C$ over $\mathbb{F}_p$. To do this, we use the
following theorem of Oda to reduce the question to studying the actions of $F$
and $V$ on the de Rham cohomology of $C$.
\begin{theorem}\textnormal{\cite{oda}}
	Let $C$ be a smooth curve over $k$ of genus $g$. There is an
	isomorphism of $\mathrm{Cart}_p$-modules between $D(\Jac(C))$ and
	$H^1_{dR}(C)$.
\end{theorem}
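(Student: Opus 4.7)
The plan is to decompose the desired isomorphism into two independent steps: first, for any abelian variety $A/k$, construct a natural isomorphism $D(A[p]) \cong H^1_{dR}(A/k)$ of $\mathrm{Cart}_p$-modules; second, for $A = \Jac(C)$, construct a natural isomorphism $H^1_{dR}(\Jac(C)/k) \cong H^1_{dR}(C/k)$ compatible with $F$ and $V$. Composing these two isomorphisms yields the theorem.

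For the first step, I would follow the Mazur--Messing approach via universal vector extensions. Given a $p$-divisible group $G$ over $k$, its covariant Dieudonn\'e module can be identified with $\mathrm{Lie}\, E(G)$, where $E(G)$ denotes the universal extension of $G$ by a vector group. Applied to $A[p^\infty]$, this yields an isomorphism $D(A[p^\infty]) \cong H^1_{dR}(A/k)$ (up to dualization, depending on convention), and the Hodge filtration
\[
0 \to \omega_{A} \to H^1_{dR}(A/k) \to H^1(A,\sO_A) \to 0
\]
on the right matches the kernel--image filtration coming from $F, V$ on the left. Truncating to $p$-torsion then gives $D(A[p]) \cong H^1_{dR}(A/k)$ as a vector space. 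To check compatibility with Frobenius and Verschiebung, I would use the crystalline realization $H^1_{\mathrm{cris}}(A/W(k))$, whose reduction mod $p$ recovers $H^1_{dR}(A/k)$ with its $(F,V)$-action.

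For the second step, I would use the Abel--Jacobi embedding $\iota\colon C \hookrightarrow \Jac(C)$ (after choosing a basepoint on $C$). The induced pullback $\iota^*\colon H^1_{dR}(\Jac(C)/k) \to H^1_{dR}(C/k)$ is an isomorphism: both sides are $2g$-dimensional, the map is injective on holomorphic $1$-forms because $H^0(\Omega^1_{\Jac(C)}) \to H^0(\Omega^1_C)$ is an isomorphism, the dual statement $H^1(\Jac(C),\sO) \cong H^1(C,\sO)$ follows from the defining property of $\Jac(C) = \Pic^0(C)$, and the Hodge-to-de Rham spectral sequence degenerates on both sides. Compatibility with $F$ and $V$ is then automatic from naturality of relative Frobenius (and its dual) with respect to the morphism $\iota$.

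The main obstacle is the compatibility check in the first step. The operators $F$ and $V$ on $D(A[p])$ are defined intrinsically in terms of the group scheme structure (the relative Frobenius on $A[p]$ and its dual under the principal polarization), whereas on $H^1_{dR}(A/k)$ they arise from the Cartier operator on $\Omega^\bullet_{A/k}$ together with the relative Frobenius on $A$. Matching them requires carefully tracing the interplay between the universal vector extension, crystalline cohomology, and the divided-power structure on the de Rham complex; the principal polarization is what cleanly produces the relations $\ker F = \im V$ and $\ker V = \im F$ on the cohomology side. Once this compatibility is established, the remainder of the argument is formal.
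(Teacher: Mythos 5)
The paper does not prove this statement; it is imported wholesale from Oda's thesis, so there is no internal proof to compare against. Judged on its own terms, your two-step decomposition --- first $D(A[p])\cong H^1_{dR}(A/k)$ via the universal vector extension and crystalline cohomology, then $H^1_{dR}(\Jac(C))\cong H^1_{dR}(C)$ via pullback along the Abel--Jacobi map --- is exactly the strategy of Oda (and of Mazur--Messing), and both halves are true theorems, so the outline is sound.

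That said, you have labelled the hard part ``the main obstacle'' and then not done it, and that is where essentially all of the content lives. Identifying $F$ and $V$ on $D(A[p])$ (defined group-theoretically from relative Frobenius and its Cartier dual) with the crystalline Frobenius and the Cartier operator on $H^1_{dR}(A/k)$ is the bulk of Oda's argument; a complete proof cannot treat it as a deferred compatibility. Two smaller points deserve care. First, $H^1_{dR}$ is contravariant in $C$ while $A\mapsto A[p]$ is covariant, so you must fix a variance convention for $D$ (Oda uses the contravariant Dieudonn\'e functor) or insert a Cartier/Serre duality; your parenthetical ``up to dualization'' is hiding a step that changes which of $F$, $V$ acts semilinearly versus antisemilinearly. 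Second, the claim that $\iota^*\colon H^1(\Jac(C),\sO)\to H^1(C,\sO)$ is an isomorphism ``from the defining property of $\Pic^0$'' secretly uses the autoduality of the Jacobian and its compatibility with the Abel--Jacobi map; this is standard but should be cited rather than asserted. Note also that the relations $\ker F=\im V$ and $\ker V=\im F$ hold because $A[p]$ is the $p$-torsion of a $p$-divisible group (a $\mathrm{BT}_1$), not because of the principal polarization; the polarization supplies the self-duality of the module, which is a separate statement.
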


\noindent Therefore, if we can effectively compute the action of $F$ and $V$ on
$H^1_{dR}(C)$, we can determine the group scheme structure of $\Jac(C)[p]$. One
is therefore interested in the following question\footnote{This is an
approximation to a harder question of computing the action of $F$ and $V$ on
the crystalline cohomology of $C$, which reduces mod $p$ to de Rham
cohomology.}:
\begin{question}
	What is the action of the Frobenius (and the Verschiebung) on
	$H^1_{dR}(C)$? Equivalently, what is the Dieudonn\'e module of the
	Jacobian of a hyperelliptic curve $C$ modulo $p$?
\end{question}
\noindent As a consequence of performing this computation, we will also be able
to describe the Ekedahl--Oort type (reviewed below) of Jacobians of
hyperelliptic curves.

\noindent In characteristic $2$, Elkin and Pries \cite{elkin-pries} have
computed the Dieudonn\'e module of the Jacobian of a hyperelliptic curve. We
will recall their result here.
\begin{theorem}\textnormal{\cite{elkin-pries}}
	Let $C\colon y^2 - y = f(x)$ be a hyperelliptic curve over $k$ (of
	characteristic $2$). Let $B$ denotes the set of branch points of $C\to
	\PP^1$, and write
	$$f(x) = \sum_{\alpha\in B} f_\alpha(x_\alpha),$$
	where $x_\alpha = (x-\alpha)^{-1}$ and $f_\alpha(x)\in xk[x^2]$ is a
	polynomial of certain degree with no monomials of even exponent. Let
	$Y_\alpha$ denote the curve $y^2 - y = f_\alpha(x)$. Then
	$$H^1_{dR}(C) \simeq D(\Jac(C)) \simeq (\Z/p\Z\oplus\mu_p)^{\#B -
	1}\oplus\bigoplus_{\alpha\in B}H^1_{dR}(Y_\alpha).$$
\end{theorem}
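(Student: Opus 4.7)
\noindent The plan is to compute $H^1_{dR}(C)$ via an explicit basis adapted to the partial fraction decomposition $f = \sum_{\alpha \in B} f_\alpha(x_\alpha)$, and then verify that the Frobenius and Cartier operator act block-diagonally with respect to this decomposition. The normalization $f_\alpha \in xk[x^2]$ is essentially forced by the Artin--Schreier substitution: replacing $y$ by $y + h$ transforms the defining equation $y^2 - y = f$ into $(y')^2 - y' = f + h^2 + h$, so any monomial $c\, x_\alpha^{2i}$ in the expansion of $f$ can be absorbed by taking $h = c\, x_\alpha^{i}$. This shows the curves $Y_\alpha$ are intrinsically determined by $C$, and a Riemann--Hurwitz count yields $g_C - \sum_\alpha g_{Y_\alpha} = \#B - 1$, matching the expected dimensions on both sides.

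First I would fix a basis of $H^1_{dR}(C)$ adapted to the branch points. Using the Artin--Schreier structure, such a basis consists of classes represented by differentials of the form $x_\alpha^j \, dx_\alpha$ and $y \cdot x_\alpha^j \, dx_\alpha$ for each $\alpha \in B$ and $j$ in a range determined by $\deg f_\alpha$, together with $2(\#B - 1)$ additional ``linking'' classes encoding the global compatibility of the local expansions across different branch points. The local blocks are identified with $H^1_{dR}(Y_\alpha)$ via pullback along the substitution $x \mapsto x_\alpha$, which intertwines the corresponding local Artin--Schreier structures.

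Next I would compute the Frobenius $F$ and the Cartier operator $\cc$ (representing $V$ on $H^1$) on this basis. The formula $\cc(g^2 \omega) = g\, \cc(\omega)$ together with the relation $y^2 = y + \sum_\alpha f_\alpha(x_\alpha)$ gives explicit matrix entries, and the claim is that they are nonzero only within a single block. On the block at $\alpha$, the formulae reduce to those governing $Y_\alpha$; on each linking block, $F$ acts as the identity on one basis vector and annihilates its partner, while $V$ acts symmetrically, which is precisely the Dieudonn\'e module of $\Z/p\Z \oplus \mu_p$. Applying Theorem~\ref{equiv} then translates this $\mathrm{Cart}_p$-module decomposition into the claimed decomposition of $D(\Jac(C))$.

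The main obstacle will be proving the block-diagonality of $F$ and $\cc$. Cross-terms of the form $y_\alpha \cdot x_\beta^j \, dx_\beta$ for $\alpha \neq \beta$ appear naturally when expanding $y = \sum_\alpha y_\alpha$ (where $y_\alpha$ is a local Artin--Schreier lift satisfying $y_\alpha^2 - y_\alpha = f_\alpha(x_\alpha)$), and one must show that modulo the linking classes these cross-terms vanish in cohomology. The odd-degree normalization $f_\alpha \in xk[x^2]$ is exactly what makes this work: it forces the even-degree contributions in the local Laurent expansions to vanish, so the Cartier operator kills all mixed monomials. Once this vanishing is established, the remaining verification is essentially bookkeeping.
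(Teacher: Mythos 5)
The paper does not actually prove this statement: it is quoted, with citation, from Elkin--Pries \cite{elkin-pries} as background material, so there is no in-paper argument to compare yours against. Judged on its own terms, your outline follows the same broad strategy as the cited source --- decompose $f$ into partial fractions, normalize each $f_\alpha$ to lie in $xk[x^2]$ via the Artin--Schreier substitution $y \mapsto y+h$, build a basis of $H^1_{dR}(C)$ adapted to the branch points, and check that $F$ and the Cartier operator respect the resulting block decomposition --- and your dimension count $g_C - \sum_\alpha g_{Y_\alpha} = \#B - 1$ is correct by Riemann--Hurwitz for Artin--Schreier covers.

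As a proof, however, the proposal has a genuine gap: the block-diagonality of $F$ and $V$, which is the entire mathematical content of the theorem, is asserted rather than established. Two concrete problems. First, the ``linking classes'' are never constructed, and the claim that they span a summand isomorphic to $D(\Z/p\Z\oplus\mu_p)^{\#B-1}$ is precisely the statement that the $p$-rank of $C$ equals $\#B-1$; this requires the Deuring--Shafarevich formula or an explicit computation of the semisimple rank of the Cartier operator, neither of which you invoke. Second, the decomposition $y=\sum_\alpha y_\alpha$ with $y_\alpha^2-y_\alpha=f_\alpha(x_\alpha)$ does not make sense on $C$ itself: the $y_\alpha$ are functions on the separate covers $Y_\alpha$ (or on their fiber product over $\mathbf{P}^1$), not on $C$, so the ``cross-terms'' you propose to kill are not yet elements of any cohomology group of $C$. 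One must instead work with formal local expansions at each branch point, or with explicit \v{C}ech--de Rham (hypercohomology) representatives, to make the comparison map to $\bigoplus_\alpha H^1_{dR}(Y_\alpha)$ well defined before one can show it is an isomorphism of $\mathrm{Cart}_p$-modules. Your closing remark that ``the remaining verification is essentially bookkeeping'' conceals exactly the computation that constitutes the Elkin--Pries proof.
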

\noindent A similar result cannot be made in such generality in odd
characteristics, since we do not have Artin--Schreier properties.

\noindent The goal of this paper is to extend the above computation to odd
characteristics. Let $C$ be a hyperelliptic curve of genus $g$ defined by an
equation $y^2 = f(x)$ over a perfect field $k$ of characteristic $p\geq 3$.
Then there is a basis for the mod $p$ Dieudonn\'e module of $\Jac(C)$ in which
the matrices for the Frobenius and Verschiebung action can be explicitly
described in terms of the coefficients in the polynomial $f(x)$.

\noindent This computation can be used to study the behavior of certain special
types of hyperelliptic curves.
\begin{letterthm}
    \label{thm1}
    Let $d$ be an odd prime such that $g = \frac{d-1}{2}$ is also prime. Let
    $p$ be a prime not equal to $d$. Then the Jacobian of the hyperelliptic
    curve $y^2=x^d+1$ is ordinary if $p \equiv 1 \pmod d$, and has $p$-rank $0$
    if $p \not\equiv 1 \pmod d$.
\end{letterthm}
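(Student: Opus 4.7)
The plan is to compute the Hasse--Witt matrix $A$ of $C\colon y^2 = x^d+1$ explicitly and then show it is invertible precisely when $p \equiv 1 \pmod d$ and nilpotent otherwise. For a hyperelliptic curve $y^2 = f(x)$ of genus $g$, the standard Manin/Yui formula gives $A_{ij} = c_{ip-j}$ for $1 \le i, j \le g$, where $c_n$ is the coefficient of $x^n$ in $f(x)^{(p-1)/2}$. For $f(x) = x^d+1$ the binomial expansion shows $c_n \not\equiv 0 \pmod p$ iff $n = dk$ with $0 \le k \le (p-1)/2$, the coefficient $\binom{(p-1)/2}{k}$ being a unit mod $p$. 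Hence $A_{ij} \not\equiv 0$ forces $ip \equiv j \pmod d$. Because $C$ is defined over $\FF_p$, Frobenius is $\FF_p$-linear, so the $p$-rank equals the stable rank of $A$: rank $g$ means ordinary, and nilpotence means $p$-rank zero.

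In the ordinary case $p \equiv 1 \pmod d$, the support condition $ip \equiv j \pmod d$ reduces to $i \equiv j \pmod d$, and since $1 \le i, j \le g < d$ this forces $i = j$. A routine range check ($i \le g < d/2$) shows $A_{ii} = \binom{(p-1)/2}{i(p-1)/d}$ is a unit mod $p$, so $A$ is diagonal and invertible, and $\Jac(C)$ is ordinary.

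For $p \not\equiv 1 \pmod d$ the goal is nilpotence of $A$. The support of $A$ defines a directed graph on $\{1,\ldots,g\}$ in which each vertex has at most one outgoing and one incoming edge, since $j$ is uniquely determined by $i$ via $j \equiv ip \pmod d$ and conversely. A directed cycle of length $s$ forces $p^s \equiv 1 \pmod d$, so the cycle coincides with a full orbit of $\langle p \rangle$ acting on $(\Z/d)^*$, every such orbit having size equal to the multiplicative order $r$ of $p$ mod $d$. Thus it suffices to show no $\langle p \rangle$-orbit is contained in $\{1,\ldots,g\}$.

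Because $g$ is prime, $r$ divides $2g = d-1$, so $r \in \{1, 2, g, 2g\}$. The value $r=1$ is excluded by hypothesis; $r = 2g$ gives orbits larger than $g$; $r = 2$ means $p \equiv -1 \pmod d$ and each orbit $\{i, d-i\}$ straddles the two halves. The delicate case is $r = g$ with $g$ odd: then $-1 \notin \langle p \rangle$, so $(\Z/d)^*$ has exactly two $\langle p \rangle$-cosets, and I must rule out that $\{1,\ldots,g\}$ equals one of them. This is the main obstacle; the key elementary observation is that $\{1,\ldots,g\}$ cannot be a subgroup of $(\Z/d)^*$ at all, since $2 \cdot g \equiv -1 \equiv d-1 \pmod d$ lies in $\{g+1, \ldots, d-1\}$ rather than $\{1,\ldots,g\}$ as soon as $d > 3$, and the same observation applied to $\{g+1,\ldots,d-1\}$ rules out the other coset. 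Primality of $g$ is used crucially to restrict $r$ to the four-element list; once the non-subgroup observation is in hand, $A$ is nilpotent and the $p$-rank vanishes.
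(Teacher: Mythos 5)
Your proposal is correct and follows essentially the same route as the paper: compute the Hasse--Witt matrix of $y^2=x^d+1$, observe its support is governed by the condition $ip\equiv j\pmod d$, get an invertible diagonal matrix when $p\equiv 1\pmod d$, and otherwise use the primality of $g$ to restrict cycle lengths to $\{1,2,g,2g\}$ and rule each out. The one place you genuinely add something is the case where $p$ has order $g$: the paper simply asserts that some row of $H$ must vanish, whereas your observation that $\{1,\dots,g\}$ would have to be the subgroup $\langle p\rangle$ yet fails to contain $2\cdot g\equiv -1$ supplies the missing justification.
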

\newtheorem*{introthm1}{Theorem \ref{thm1}}
\noindent We can also answer some questions posed by Glass and Pries in
\cite{glass-pries} regarding the existence of certain hyperelliptic curves
whose Jacobians realize a certain Dieudonn\'e module:
\begin{letterthm}
    \label{thm2}
    Let $G$ denote the group scheme corresponding to the $p$-torsion of an
    abelian variety of dimension $3$ with $p$-rank $0$ and $a$-number $1$. For
    $p = 3, 5, 7$, there is a hyperelliptic curve $C$ of genus $4$ such that
    $$\Jac(C)[p] = G\oplus(\Z/p\oplus\mu_p).$$
    For $p=3,5$, there is a hyperelliptic curve $C$ of genus $5$ such that
    $$\Jac(C)[p] = G\oplus(\Z/p\oplus\mu_p)^2.$$
\end{letterthm}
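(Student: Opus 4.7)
The plan is to combine the explicit matrix formulas for $F$ and $V$ acting on $H^1_{dR}(C)$ developed earlier in the paper with a computer search in Magma over hyperelliptic curves defined over the small prime fields in the statement.

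First, for each pair $(g,p)$ appearing in the theorem, I would parametrize candidate hyperelliptic curves $C\colon y^2 = f(x)$ of genus $g$ over $\FF_p$ by squarefree $f \in \FF_p[x]$ of degree $2g+1$ or $2g+2$. After quotienting by the natural $\mathrm{PGL}_2(\FF_p)$-action on $\PP^1$ and rescaling in $y$, the search space is finite. Second, for each such $C$ I would apply the explicit formulas proved in the body of the paper to write down matrices $M_F$ and $M_V$ for Frobenius and Verschiebung on $H^1_{dR}(C)$ in the standard basis. By Oda's theorem together with Theorem~\ref{equiv}, the $\mathrm{Cart}_p$-module generated by $(M_F, M_V)$ is precisely $D(\Jac(C))$.

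Third, I would test whether $D(\Jac(C)) \simeq D(G) \oplus D(\Z/p\oplus\mu_p)^{g-3}$. The \'etale-multiplicative summand $(\Z/p\oplus\mu_p)^{g-3}$ always splits off as the maximal submodule on which both $F$ and $V$ act bijectively; its rank equals the $p$-rank of $C$, which in turn equals $\dim_k \bigcap_{n\geq 1} \im(M_F^n)$. Forcing this $p$-rank to equal $g-3$ and forcing the $a$-number $\dim_k(\ker M_F\cap\ker M_V)$ to equal $1$ pins down the invariants of $G$. Since in dimension three these two numbers determine the Ekedahl--Oort type uniquely, matching both invariants suffices to recognize the complementary local-local summand as $G$. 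Fourth, I would implement this test in Magma and loop over the parameter space; exhibiting a single witness $f(x)$ for each $(g,p)$ completes the proof, and the verification of its $F,V$-matrices amounts to a finite computation.

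The main obstacle is computational rather than conceptual. A naive exhaustive search over squarefree $f$ of degree up to $2g+2$ has size on the order of $p^{2g+2}$; for $(g,p)=(5,5)$ this is already prohibitively large. I would therefore restrict to structured subfamilies --- for instance curves of the form $y^2 = xf(x^2)$ or curves with prescribed additional automorphisms --- that are both cheaper to enumerate and historically more likely to produce atypical $p$-torsion. A secondary, minor issue is that if one did not wish to appeal to the uniqueness of the Ekedahl--Oort type with $(f,a)=(0,1)$ in dimension three, one would instead distinguish $G$ directly via the joint flag $\{\ker M_F^i \cap \im M_V^j\}$ read off from the computed matrices; this is a routine linear-algebra check once $M_F$ and $M_V$ are in hand.
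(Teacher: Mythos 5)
Your proposal is correct and is essentially the paper's own proof: use the explicit $F$ and $V$ matrices on $H^1_{dR}(C)$ from the earlier sections, run a finite Magma search over hyperelliptic $f(x)$, and recognize the target group scheme by its Ekedahl--Oort data (the paper matches the full canonical types $[1,1,2,3]$ and $[1,2,2,3,4]$, which, as you observe, are exactly the types forced by $(f,a)=(g-3,1)$). The paper then simply records explicit witnesses, e.g.\ $y^2=2x^9+x^8+x$ at $p=3$ for genus $4$.
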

\newtheorem*{introthm2}{Theorem \ref{thm2}}
\noindent Our main obstruction to extending these results to higher primes and
genus is twofold: first, the formulae obtained are inhumanely complicated,
even at small primes; second, the Magma code written is not optimal. The latter
problem should be something which can be easily fixed. This may allow for
extensions of the above results.

\subsection{Acknowledgements}
We would like to thank David Zureick-Brown for suggesting this project, and the
Emory REU for allowing us to conduct this research. We thank David
Zurieck-Brown and Jackson Morrow for comments on a draft. Thanks to many
people, Emory was an intellectually stimulating environment: we are
particularly grateful to Noam Kantor, Jackson Morrow, Ken Ono, and David
Zureick-Brown. We would also like to thank Rachel Pries and Sammy Luo for
helpful discussions regarding the topic of this paper. Lastly, we'd like to
thank the other participants in the Emory REU for making our summer fun and
enjoyable.

This research was supported by the Templeton World Charity Foundation and NSF
Grant DMS-1557690.

\section{Background}
\subsection{The Hasse invariant}
\subsubsection{Elliptic curves}
Let $E$ be an elliptic curve defined by $y^2 = f(x)$ over a field $k$ of
characteristic $p$. There are multiple ways to state that $E$ is ordinary, some
of which are as follows.
\begin{itemize}
    \item We have $E[p^k]\simeq \mathbf{Z}/p^k$ for all $k$.
    \item The trace of Frobenius on $E$ is nonzero.
    \item The Newton polygon of $E$ is ``as low as possible'', i.e., has a line
	of slope $-1$ from $(0,1)$ to $(1,0)$, and has a line of slope $0$ from
	$(1,0)$ to $(2,0)$.
    \item The formal group associated to $E$ is of height $1$, i.e., is
	isogenous to $\mathbf{G}_m$.
    \item Let $H_p$ denote the coefficient of $x^{p-1}$ in $f(x)^{(p-1)/2}$.
	This is known as the \emph{Hasse invariant}. Then $H_p\neq 0$.
\end{itemize}
We will be interested in generalizations to higher-dimensional abelian
varieties of the last characterization.\\

%Supersingular elliptic curves admit a similar characterization:
%* The formal group associated to $E$ is of height $2$.
%* We have $E[p^k]\simeq 0$ for all $k$.
%* The trace of Frobenius on $E$ is zero.
%* The Newton polygon is a line of slope $1/2$.
%* The Hasse invariant is zero.

\noindent The Hasse invariant is easy to compute, but its significance is
opaque. To explain the definition, we need to set some notation. Let $F$ denote
the Frobenius on $E$; this is a morphism of schemes $E\to E^{(p)}$. The dual of
this is the Verschiebung, which is a morphism $V\colon E^{(p)}\to E$.\\

\noindent The Verschiebung induces a morphism on (co)tangent spaces, and in
particular, gives a morphism $V\colon\Omega^1_{K} \to \Omega^1_{K^p}$, where $K
= K(E)$ and $K^p$ is $K(E^{(p)})$. (Note that the Frobenius also gives a
morphism $\Omega^1_{K^p}\to \Omega^1_{K}$.) We can compute this map $V$
explicitly by fixing a nice basis for the differential forms of degree $1$ and
of the first kind (these are exactly the global sections of $\Omega^1_E$). This
is just a one-dimensional $k$-vector space, so we may choose the basis
consisting of the vector $dx/y \coloneqq \omega$.\\

\noindent To determine $V$, it suffices to determine $V(\omega)$. Explicitly,
we find that we can write any element of $\Omega^1_{K}$ in the form
$d\phi+\eta^p x^{p-1} dx$, where $\phi,\eta\in K$ and $\eta^p\in K^p$, and that
$$V(d\phi + \eta^p x^{p-1} dx) = \eta\ dx;$$
this is exactly as one would expect the Verschiebung to behave, since it is
dual to the Frobenius. Computing $V(\omega)$ will be a moment away if we can
write $\omega$ in this form. Define $c_i$ by the expansion
$$f(x)^{(p-1)/2} = \sum^{3(p-1)/2}_{j=0}c_j x^j dx.$$
Then (as in \cite[\S V.4]{silverman})
$$\omega = y^{-p} \left(y^2\right)^{(p-1)/2} dx = y^{-p}\sum^{3(p-1)/2}_{j=0}
c_j x^j dx.$$
\noindent One can then compute (we will do this in more generality below) that
$$V(\omega) = c^{1/p}_{p} dx/y.$$
Similarly, one finds that $F(\omega) = c_p d^px^p/y^p$, where $d^p \colon
K^p\to \Omega^1_{K^p}$ is the universal derivation.\\

\noindent In particular, the nonvanishing of the Hasse invariant is equivalent
to $V$ inducing a nonzero map on tangent spaces. This means that the
multiplication-by-$p$ map (which is the Verschiebung composed with the
Frobenius) will have seperable degree $p$ --- and one can show that this is
equivalent to $E[p]$ having $p$ connected components.

\subsubsection{Hyperelliptic curves}
The method of generalization to hyperelliptic curves is an exact analogue of
the above. We follow \cite{yui}. Let $C\colon y^2 = f(x)$ now be a
hyperelliptic curve of genus $g$, so that the degree of $f$ is $2g+1$ or
$2g+2$. Then -- exactly as above -- we can consider the effect of the
Verschiebung and the Frobenius on the sheaf of Kahler differentials.

This gives the ``modified Cartier'' and the ``Cartier'' operators,
respectively. We may pick the basis (as a $k$-vector space) for the global
sections of $\Omega^1_{C}$ given by
$$\omega_i \coloneqq x^{i-1} dx/y,$$
for $1\leq i\leq g$. Then, we can rewrite
$$\omega_i = y^{-p} x^{i-1} \sum^{(p-1)/2\cdot (2g+1)}_{j=0} c_j x^j dx,$$
and attempt to compute the effect of applying $V$ and $F$ to each of the basis
vectors.

\noindent Again, we find that we can write any element of $\Omega^1_{K}$ in the
form $d\phi+\eta^p x^{p-1} dx$, where $\phi,\eta\in K$ and $\eta^p\in K^p$, and
that $V$ sends this to $\eta \ dx$.

\begin{lemma}\label{vaction}
    The action of $F$ and $V$ on $H^0(C,\Omega^1_C)$ is determined by the
    following equations:
    \begin{align}
	V(\omega_i) & = \sum^{g-1}_{j=0}c^{1/p}_{(j+1)p - i} \frac{x^j}{y} dx,\\
	F(\omega_i) & = 0.
    \end{align}
\end{lemma}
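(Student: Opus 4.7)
\medskip

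\noindent \textbf{Proof plan.} The plan is to reduce both formulas to the explicit recipe $V(d\phi + \eta^p x^{p-1}dx) = \eta\, dx$ given just above the lemma, after first massaging $\omega_i$ into the required normal form. I would handle the two formulas separately.

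\smallskip

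For $F(\omega_i) = 0$, I would use that the relative Frobenius $F_C : C \to C^{(p)}$ induces pullback $F_C^* : \Omega^1_{C^{(p)}} \to \Omega^1_C$, and that locally any 1-form $g\, dh$ on $C^{(p)}$ pulls back to $g^p\, d(h^p) = p g^p h^{p-1}\, dh = 0$ in characteristic $p$. So $F$ vanishes on $\Omega^1$ and \emph{a fortiori} on $H^0(C,\Omega^1_C)$.

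\smallskip

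For $V(\omega_i)$, the key calculation is to rewrite
\[
\omega_i \;=\; \frac{x^{i-1}}{y}\,dx \;=\; y^{-p}\cdot x^{i-1}\cdot y^{p-1}\,dx \;=\; y^{-p} x^{i-1} f(x)^{(p-1)/2}\,dx \;=\; \sum_{j} c_j\, y^{-p} x^{i-1+j}\,dx,
\]
using $y^2 = f(x)$ and the defining expansion $f(x)^{(p-1)/2} = \sum_j c_j x^j$. I would then split this sum according to whether $p \mid (i+j)$ or not. In characteristic $p$ we have $d(y^{-p}) = 0$, so for $p \nmid (i+j)$ the summand is exact:
\[
c_j y^{-p} x^{i-1+j}\,dx \;=\; d\!\left(\tfrac{c_j y^{-p} x^{i+j}}{i+j}\right),
\]
and such terms lie in the kernel of $V$. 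The remaining terms correspond to $j = kp - i$ with $k \geq 1$; writing $x^{i-1+j} = x^{kp-1} = (x^{k-1})^p\cdot x^{p-1}$ and using that $k$ is perfect to extract $p$-th roots, I can cast them as
\[
c_j y^{-p} x^{kp-1}\,dx \;=\; \bigl(c_j^{1/p}\, y^{-1}\, x^{k-1}\bigr)^{p}\cdot x^{p-1}\,dx,
\]
which is exactly the $\eta^p x^{p-1}\,dx$ form with $\eta = c_{kp-i}^{1/p}\, y^{-1}\, x^{k-1}$. Applying $V$ and reindexing $j = k - 1$ yields
\[
V(\omega_i) \;=\; \sum_{j \geq 0} c_{(j+1)p-i}^{1/p}\,\frac{x^{j}}{y}\,dx.
\]

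\smallskip

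Finally I would truncate the sum: since $f$ has degree $2g+1$ or $2g+2$, one has $c_\ell = 0$ for $\ell > (g+1)(p-1)$, and a direct inequality check shows $(j+1)p - i > (g+1)(p-1)$ whenever $j \geq g$ (for $1 \leq i \leq g$), so those coefficients vanish and the sum runs only over $0 \leq j \leq g - 1$, matching the stated formula. I don't anticipate a genuine obstacle — the one place deserving care is the use of $d(y^{-p}) = 0$ in characteristic $p$, which is what makes the decomposition into an exact part plus the ``$\eta^p x^{p-1}\,dx$'' part work cleanly; the rest is bookkeeping on the coefficients of $f(x)^{(p-1)/2}$.
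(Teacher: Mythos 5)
Your argument is correct and follows essentially the same route as the paper: rewrite $\omega_i = y^{-p}x^{i-1}f(x)^{(p-1)/2}\,dx$, split the expansion into exact summands (those with $p \nmid (i+j)$) plus summands of the form $\eta^p x^{p-1}\,dx$, and read off $V$. Your additional justifications — the pullback computation showing $F$ kills all $1$-forms, and the degree bound truncating the sum at $j = g-1$ — are details the paper leaves implicit, but the underlying proof is the same.
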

\begin{proof}
    It is clear that $F(\omega_i) = 0$, so we will just compute $V(\omega_i)$.
    This computation will be immediate if we can write $\omega_i$ in the form
    $d\phi+\eta^p x^{p-1} dx$, which we can do: sinc
    $$\omega_i = y^{-p} \left(\sum^{(p-1)/2\cdot (2g+1)}_{j=0} c_j x^{j+i-1}
    \right)dx,$$
    we get
    \begin{align*}
	\omega_i & = y^{-p}\left(\sum_{i+j\neq 0\mod p}c_j x^{j+i-1}\right) +
	y^{-p} \left(\sum_{j}c_{(j+1)p-i}x^{(j+1)p-1}dx\right)\\
	& = d\left(y^{-p}\sum_{i+j\neq 0\mod p}\frac{c_j}{j+i} x^{j+i}\right) +
	\sum_{j}c_{(j+1)p-i}\frac{x^{jp}}{y^p}x^{p-1}dx.
    \end{align*}
    It follows that
    $$V(\omega_i) = \sum^{g-1}_{j=0}c^{1/p}_{(j+1)p-i}\frac{x^j}{y}dx,$$
    as desired.
\end{proof}
\noindent Define a matrix $A$ via $A_{i,j} \coloneqq c_{ip-j}$; then, Lemma
\ref{vaction} implies that if $A^{(1/p)}$ denotes the matrix such that
$A^{(1/p)}_{i,j} = \left(A_{i,j}\right)^{1/p}$, the Verschiebung $V$ acts via
$A^{(1/p)}$. This matrix is called the \emph{Hasse--Witt matrix}. It is clear
that the Hasse--Witt matrix of $C$ is unique up to transformations of the form
$A\mapsto S^{(p)} A S^{-1}$, where $S^{(p)}_{i,j} \coloneqq (S_{i,j})^p$.

As an example of how the Hasse--Witt matrix may be used to detect
supersingularity, we state the following theorem of Yui.
\begin{theorem}[\cite{yui}]
    Let $C$ be a hyperelliptic curve of genus $g$ over $k$. If the Hasse--Witt
    matrix of $C$ is $0$ in $k$, then the Jacobian $J(C)$ is supersingular and
    is isogenous to $g$ copies of a supersingular curve (over some finite
    extension of $k$).
\end{theorem}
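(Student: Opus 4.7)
The plan is to show that vanishing of the Hasse--Witt matrix forces the Jacobian to be superspecial, from which the isogeny statement follows from the classical structure theorem recalled in the introduction.

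First, by Lemma \ref{vaction} the matrix $A^{(1/p)}$ represents the action of $V$ on the $g$-dimensional space $H^0(C, \Omega^1_C)$ with respect to the basis $\{\omega_1, \dots, \omega_g\}$. Thus the Hasse--Witt matrix being zero in $k$ is equivalent to the Verschiebung acting as zero on $H^0(C, \Omega^1_C)$. By Oda's theorem, $H^1_{dR}(C) \simeq D(\Jac(C))$ as $\mathrm{Cart}_p$-modules, and under the Hodge filtration the subspace $H^0(C, \Omega^1_C) \subseteq H^1_{dR}(C)$ coincides with $\ker F$ (since the Frobenius is divisible by $p$ on the Hodge piece and therefore vanishes mod $p$).

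Next, I would translate this into the $a$-number. Under the Dieudonn\'e correspondence, the $a$-number of $\Jac(C)$ is the dimension of $\Hom_{\mathrm{Cart}_p}(k, D(\Jac(C)))$, i.e., the dimension of $\ker F \cap \ker V$ in $H^1_{dR}(C)$. Once $V$ vanishes on $H^0(C, \Omega^1_C) = \ker F$, this intersection equals all of $H^0(C, \Omega^1_C)$ and so $a = g$. In particular the $p$-rank must be $0$ (one can also see this directly: the product $A^{(p^{n-1})} \cdots A^{(p)} A$ computes the $p$-rank and is zero when $A = 0$). Hence $\Jac(C)$ is superspecial in the sense recalled in the introduction.

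Finally, I would invoke the classical fact already stated in the introduction: a superspecial principally polarized abelian variety (one with $p$-rank $0$ and $a$-number $g$) is isogenous to $E^g$ for some supersingular elliptic curve $E$, after passage to a sufficiently large finite extension of $k$. Combining the two steps gives the desired conclusion. The only nontrivial input is this last structural result about superspecial abelian varieties, which is classical (due to Oort, following Deligne--Shioda) and can simply be cited; the rest is a direct translation between the Hasse--Witt matrix, the Dieudonn\'e module via Oda, and the definition of the $a$-number.
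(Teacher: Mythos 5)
The paper does not actually prove this statement---it is quoted from Yui's paper \cite{yui} as a black box---so there is no internal argument to compare against. Your proof is correct and is the standard modern route: Hasse--Witt matrix zero means $V$ (equivalently, the Cartier operator) kills $H^0(C,\Omega^1_C)$, which is already contained in $\ker F$ on $H^1_{dR}(C)$, so $H^0(C,\Omega^1_C)\subseteq \ker F\cap\ker V$ and the $a$-number is $g$ (note you only need the inclusion $H^0(C,\Omega^1_C)\subseteq\ker F$, not the equality you assert; the equality does hold but requires the self-duality coming from the principal polarization to pin down $\dim\ker F=g$). Two remarks on what your argument buys and what it leans on. First, you in fact prove something stronger than the quoted statement: $\Jac(C)$ is \emph{superspecial}, not merely supersingular. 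Second, the entire weight of the proof rests on the cited structural result that $a=g$ forces the abelian variety to be isomorphic (not just isogenous) to a product of supersingular elliptic curves; this is a genuinely nontrivial theorem of Oort (together with the Deligne--Ogus--Shioda fact that for $g\geq 2$ all such products are isomorphic), and it is exactly the assertion the paper's introduction makes about superspecial abelian varieties without proof. So your reduction is clean and complete modulo that citation, which is a legitimate thing to cite.
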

\subsection{de Rham cohomology}
Notice that $F$ and $V$ both take exact forms to zero. (In fact $F$ takes all
forms to $0$.) In fact, the Hasse--Witt matrix $A$ is exactly the matrix (in the
basis written down above) for the action of the Frobenius on
$H^1(C,\mathcal{O}_C)$. Equivalently, by Serre duality, it is the matrix of the
Verschiebung on the zeroth cohomology of the de Rham complex.\\

\noindent There is a larger vector space which contains more information than
just $H^1(C,\mathcal{O}_C)$ and $H^0(C,\Omega^1_C)$. This is the (first) de
Rham cohomology of $C$. Recall that the Hodge--de Rham spectral sequence runs
$$E_1^{p,q} = H^p(X,\Omega^p_{X/K}) \Rightarrow H^{p+q}_{dR}(X).$$
The resulting filtration on $H^\ast_{dR}(X)$ is called the \emph{Hodge
filtration}.
\begin{theorem}
    The Hodge--de Rham spectral sequence collapses at the $E_1$-page if $X$ is
    smooth and proper.
\end{theorem}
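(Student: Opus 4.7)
The plan is to prove this in the form actually needed by the rest of the paper, namely when $X = C$ is a smooth projective curve over a perfect field of characteristic $p \geq 3$. In this dimension-one setting I expect a direct argument to work; for general smooth proper $X$ in characteristic $p$ the statement really requires the Deligne--Illusie hypotheses (a lift to $W_2(k)$ together with $\dim X < p$), both of which are vacuously satisfied for curves. In characteristic zero the statement follows from classical Hodge theory after a standard Lefschetz-principle reduction to $\mathbb{C}$.

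First I would invoke the standard Euler-characteristic inequality: for any first-quadrant cohomological spectral sequence of finite-dimensional $k$-vector spaces converging to $H^\bullet$, one has $\sum_{p+q=n}\dim E_1^{p,q} \geq \dim H^n$, with equality in every degree if and only if every differential on every page vanishes. Thus it suffices to produce equality in every total degree. For a smooth projective curve $C$ of genus $g$, the $E_1$-page is concentrated in the square $0 \leq p,q \leq 1$ with entries $E_1^{0,0} = k$, $E_1^{1,0} \cong H^0(C,\Omega^1_C) \cong k^g$, $E_1^{0,1} \cong H^1(C,\mathcal{O}_C) \cong k^g$, and $E_1^{1,1} \cong H^1(C,\Omega^1_C) \cong k$ (the last by Serre duality). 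Bidegree considerations kill every $d_r$ with $r \geq 2$, so the only potentially nonzero differentials are $d_1^{0,0}\colon k \to H^0(C,\Omega^1_C)$ and $d_1^{0,1}\colon H^1(C,\mathcal{O}_C) \to H^1(C,\Omega^1_C)$.

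Next I would observe that $d_1^{0,0}$ sends $1 \mapsto d(1) = 0$ and so vanishes by $k$-linearity. To dispose of $d_1^{0,1}$ my plan is to use Serre duality: the pairings $H^1(C,\mathcal{O}_C) \otimes H^0(C,\Omega^1_C) \to k$ and $H^0(C,\mathcal{O}_C) \otimes H^1(C,\Omega^1_C) \to k$ are perfect, and a \v Cech-cocycle calculation shows that cup product with the de Rham differential $d$ intertwines them, so that $d_1^{0,1}$ is the transpose of $d_1^{0,0}$. Since the latter vanishes, so does the former, and the Euler inequality then forces degeneration and gives $\dim H^1_{dR}(C) = 2g$.

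The main obstacle is the transpose identification $d_1^{0,1} = (d_1^{0,0})^\vee$ under Serre duality; unpacking the pairings at the cocycle level is a routine but slightly painful verification. A cleaner route I would consider is to compute $H^\bullet_{dR}(C)$ directly, either via a \v Cech computation on a two-patch affine cover of $C$, or by lifting $C$ to characteristic zero (always possible for smooth projective curves) and comparing with Betti cohomology to get $\dim H^1_{dR}(C) = 2g$ on the nose; the Euler inequality then forces degeneration with no transpose argument required. For the general smooth proper case in positive characteristic I would simply cite Deligne--Illusie, and for characteristic zero I would reduce to $\mathbb{C}$ and quote the Hodge decomposition.
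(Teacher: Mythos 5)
The paper offers no proof of this statement at all --- it is quoted as background, in a form (``smooth and proper'') that is in fact false in positive characteristic without further hypotheses (Mumford constructed smooth proper surfaces in characteristic $p$ with non-closed global $1$-forms, hence nondegenerate Hodge--de Rham spectral sequence). You are right to flag this and to prove only the case the paper actually uses, namely $X = C$ a smooth proper geometrically connected curve, where the statement is unconditionally true. Your argument is correct: the Euler-characteristic inequality reduces everything to showing $d_1^{0,0}$ and $d_1^{0,1}$ vanish; $d_1^{0,0}$ kills constants; and for $d_1^{0,1}\colon H^1(C,\mathcal{O}_C)\to H^1(C,\Omega^1_C)$ you do not really need the full transpose identification you worry about --- since $H^1(C,\Omega^1_C)$ is one-dimensional and the trace isomorphism $H^1(C,\Omega^1_C)\to k$ is computed by a sum of residues on a two-patch \v Cech cover, it suffices to note that the residue of an exact differential $df$ vanishes at every point, so $\mathrm{tr}\circ d_1^{0,1}=0$ and hence $d_1^{0,1}=0$. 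This is exactly the ``slightly painful verification'' you anticipated, and it collapses to one line. Your fallback routes (direct \v Cech computation of $\dim H^1_{dR}(C)=2g$, lifting to characteristic zero, or citing Deligne--Illusie since $\dim C = 1 < p$) are all valid as well. Since the paper supplies nothing here, your write-up is strictly an improvement on what the paper provides, and it would be worth recording that the theorem as stated needs either the curve hypothesis or the Deligne--Illusie conditions.
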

\noindent The associated gradation of the Hodge filtration is exactly
$\mathrm{gr}^p H^i_{dR}(X) = H^{i-p}(X,\Omega^p_{X/K})$. Let $h^{p,q} = \dim_K
H^q(X,\Omega^p_{X/K})$; then Serre duality gives an equality $h^{p,q} =
h^{n-p,n-q}$, where $n$ is the dimension of $X$. One also has ``conjugation
symmetry'', which says that $h^{p,q} = h^{q,p}$.\\

\noindent We can, in particular, consider the Hodge filtration for
$H^1_{dR}(X)$. Given the description above, we have a short exact sequence
$$0\to H^0(X,\Omega^1_{X/R}) \simeq \mathrm{gr}^1 H^1_{dR}(X) \to H^1_{dR}(X)
\to \mathrm{gr}^0 H^1_{dR}(X) \simeq H^1(X,\mathcal{O}_X) \to 0.$$
The two associated gradations are of the same dimension.\\

\noindent As above, suppose that $X = C$ is a smooth geometrically connected
projective hyperelliptic curve over $k$ of genus $g$. It follows from GAGA that
$H^1_{dR}(C)$ is a $2g$-dimensional $k$-vector space. Since $C$ lives over $k$,
we still have an action of $F$ and $V$ on $H^1_{dR}(C)$. The rest of this paper
is devoted to finding explicit formulae for this action.

\section{The Dieudonn\'e module of Jacobians}
\subsection{A basis for the first de Rham cohomology group}
Let $C$ be a hyperelliptic curve defined by an equation $y^2 = f(x)$ over a
perfect field $k$ of odd characteristic $p$, where $f(x)$ is a separable
polynomial of degree $d$. By standard arguments (see \cite[\S 2.8]{poonen}) if
$d$ is odd we have that $d = 2g+1$ where $g$ is the genus, and if $d$ is even
we have that $d = 2g+2$. If we let $\mathbb{A}^1_x$ and $\mathbb{A}^1_u$ be the
standard affine charts on $\mathbb{P}^1$, where $u = 1/x$, we have that $C$ is
defined over $\mathbb{A}^1_x$ by the equation $y^2 = f(x)$, and over
$\mathbb{A}^1_u$ by the equation $v^2 = u^{2g+2}f(1/u)$, glued together by
setting $v = y/x^{g+1}$. Let $\pi\colon C \to \mathbb{P}^1$ be the projection.
Let $P_0=[0:1]$ and $P_\infty=[1:0]$.\\

\noindent On $C$,
$$(x) = \pi^*P_0 - \pi^*P_\infty.$$
If $d$ is odd, this means there will be a single point of $C$  above
$P_\infty$, so $x$ will have a pole of order $2$ there, and if $d$ is even,
$(x)$ will have a pole of order $1$ at each of the two $\bar{k}$ points above
$P_\infty$. Whether $d$ is even or odd, we know that
$$(dx/y) = (g-1)\pi^*P_\infty,$$
as in \cite[\S 2.8]{poonen}. If $d$ is odd, $y$ will have a pole of order
$d=2g+1$ above $P_\infty$, and if $d$ is even, $(y)$ above infinity will look
like $(g+1) \pi^*P_\infty$.\\

\noindent Recall that a canonical basis for $H^0(C,\Omega^1_C)$ is given by the
$1$-forms $\tau_{i-1} = x^{i-1} dx/y$ for $i$ ranging between $1$ and $g$. To
compute a basis for $H^1(C, \mathcal{O}_C)$, we use the two term Cech cover
$\mathscr{C}$ given by $U = \pi^{-1}(\mathbb{A}^1_u)$ and $V =
\pi^{-1}(\mathbb{A}^1_x)$. Moreover,
$$U \cap V = \spec{ k[x,y,x^{-1}]/(y^2-f(x))},$$
so $\Gamma(U \cap V, \mathcal{O}_{U \cap V})$ has a basis given by $\{x^i\}_{i
\in \mathbb{Z}} \cup \{yx^j\}_{j \in \mathbb{Z}}$.\\

Similarly,
$$\Gamma(V, \mathcal{O}_V)=k[x,y]/(y^2-f)$$
has a basis given by $x^i$ for all non-negative $i$ and $yx^j$ for all
nonnegative $j$. $\Gamma(U, \mathcal{O}_U)=k[x^{-1}, yx^{-(g+1)}]/(y^2 -f)$ has
a basis given by $x^i$ for all non-positive $i$ and $yx^j$ for all $j\leq
-(g+1)$. Thus the basis vectors of
$$H^1(C, \co_C)=\Gamma(U \cap V, \co_{U \cap V})/\Gamma(U, \mathcal{O}_U)\oplus
\Gamma(V, \mathcal{O}_V)$$
consist of $\{y/x^j\}_{j = 1, \dots, g}$.\\

\iffalse
Since $H^0(C,\Omega^1_C)$ is Serre dual to $H^1(C,\co_C)$, we can use this
description to find a basis of the latter vector space.
\begin{lemma}\label{serredual}
    The functions $\eta_i=y/x^i$, as $i$ ranges from $1$ to $g$, form a basis
    for $H^1(C,\co_C)$.
\end{lemma}
\begin{proof}[Proof sketch.]
    The Serre duality pairing $H^0(C,\Omega^1_C)\times H^1(C,\co)\to k$ is
    given by the residue map. By the residue theorem, the pairing $\langle
    \tau_i,\eta_j\rangle$ can be calculated by either summing the residues in
    $U$, or (equivalently) by the negative of the residue at $P_0$.

    Since $\eta_j\tau_i = x^{i-j}dx$, we know by the above computations that
    $$(\eta_j\tau_i) = (x^{i-j}dx) =
    (2i-2j+1)P_0+P_1+\ldots+P_{d-1}+(2j-2i-3)P_\infty.$$
    If $j-1\leq i$, this has a pole at $P_\infty$, and if $j-1\geq i$, this has
    a pole at $P_0$. It follows that when $i+1=j$, we have $\eta_j\tau_i =
    dx/x$, so we're reduced to computing the residue at $P_0$.

    A uniformizer at $P_0$ is $x/y=t$. As $y^2 = f(x)$, we know that $t^2 =
    x^2/f(x)$. If $p(x) = f(x)/x$, we compute that $t^2 = x/p(x)$. The residue
    at $P_0$ is therefore $2p(0)$, and dividing by this\footnote{In the
    formulae below, we will omit writing this constant.} gives the Serre dual
    basis of $H^1(C,\co)$.
    \end{proof}
\fi

\noindent We can now extend the bases of $H^0(C,\Omega^1_C)$ and $H^1(C,\co)$
to a basis of $H^1_{dR}(C, \mathscr{C})$. We begin by noting that the basis of
$H^0(C,\Omega^1_C)$ can be easily extended by defining
$$\wt{\tau}_i = \left(x^i\frac{dx}{y},x^i\frac{dx}{y},0\right),$$
as $i$ ranges from $0$ to $g-1$.

To extend the basis $H^1(C, \co_C)$ we will write out the polynomial for $f(x)$ as
$$f(x) = \sum^d_{k=0}a_kx^k.$$
We can then compute:
\begin{align*}
    d(y/x^j) & = \frac{dy}{x^j} - j \frac{ydx}{x^{j+1}}\\
    & = \frac{1}{x^j}\left(\frac{f'}{2y} - j\frac{y}{x}\right)dx\\
    & = \frac{1}{x^j} \left(f'(x) - 2jf(x)/x\right) \frac{dx}{2y}\\
    & = \sum_{k=0}^d (k-2j)a_k x^{k-j-1}\frac{dx}{2y}.
\end{align*}
Motivated by this, define differential forms (for $j$ ranging between $1$ and
$g$)
\begin{align*}
    \omega_1^{(j)} & = \sum_{k=0}^j (k-2j)a_k x^{k-j-1}\frac{dx}{2y},\\
    \omega_2^{(j)} & = -\left(\sum_{j+1}^d (k-2j)a_k
    x^{k-j-1}\frac{dx}{2y}\right).
\end{align*}
As $dx/y$ is regular and $\omega_1^{(j)}$ is the product of a polynomial in
$1/x$ and $\frac{dx}{y}$, $\omega^{(j)}_1$ has a pole only at $0$. A similar
argument proves that $\omega_2$ has a pole only at $\infty$. This discussion
implies the following result.

\begin{prop}
    The elements $(\omega^{(j)}_1,\omega^{(j)}_2,yx^{-j})=\wt{\eta}_i$ for
    $1\leq j\leq g$, along with the elements $\wt{\tau}_i$ for $0\leq i\leq
    g-1$, form a basis for $H^1_{dR}(C)$.
\end{prop}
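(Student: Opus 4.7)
The plan is to use the degeneration of the Hodge--de Rham spectral sequence (invoked above for $X = C$), which produces the short exact sequence
$$0 \to H^0(C, \Omega^1_C) \to H^1_{dR}(C) \to H^1(C, \mathcal{O}_C) \to 0$$
whose flanking terms each have dimension $g$. So $\dim H^1_{dR}(C) = 2g$, and it is enough to exhibit $2g$ classes such that the $\wt{\tau}_i$ are lifts of a basis of the subobject and the $\wt{\eta}_j$ project to a basis of the quotient.

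To carry this out, I would first spell out the Čech--de Rham total complex for the two-element cover $\mathscr{C} = \{U, V\}$. A degree-$1$ cochain is a triple $(\alpha_U, \alpha_V, g)$ with $\alpha_\bullet \in \Omega^1(\bullet)$ and $g \in \mathcal{O}(U \cap V)$; because $C$ is a curve the only nontrivial cocycle condition is $\alpha_U - \alpha_V = dg$ on $U \cap V$ (up to a sign that I would pin down at the start), and coboundaries have the form $(dh_U, dh_V, h_V - h_U)$. Under this identification the map $H^0(C, \Omega^1_C) \to H^1_{dR}(C)$ sends a global form $\omega$ to $(\omega, \omega, 0)$, while the projection to $H^1(C, \mathcal{O}_C)$ extracts the Čech component $g$.

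With this language in place the verification is immediate from computations already in the text. Each $\wt{\tau}_i$ is the image of the global regular differential $x^i dx/y$ recorded in the standard basis of $H^0(C, \Omega^1_C)$, so the $g$ classes $\wt{\tau}_i$ are linearly independent and span the subspace $H^0(C, \Omega^1_C) \subset H^1_{dR}(C)$. For each $\wt{\eta}_j$, the pole analysis just before the statement gives $\omega_1^{(j)} \in \Omega^1(U)$ and $\omega_2^{(j)} \in \Omega^1(V)$, and the displayed calculation of $d(y/x^j)$ already shows $\omega_1^{(j)} - \omega_2^{(j)} = d(y/x^j)$, so $\wt{\eta}_j$ is a cocycle whose image under the projection is the Čech class $[y/x^j] \in H^1(C, \mathcal{O}_C)$. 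Since $\{y/x^j\}_{j=1,\ldots,g}$ is a basis of $H^1(C, \mathcal{O}_C)$ by the Čech computation above, exactness of the short exact sequence forces $\{\wt{\tau}_i\} \cup \{\wt{\eta}_j\}$ to be a basis of $H^1_{dR}(C)$. The only real obstacle is matching sign conventions between the Čech and de Rham differentials so that the cocycle relation $\omega_1^{(j)} - \omega_2^{(j)} = d(y/x^j)$ is of the correct form; once that is fixed, everything else is bookkeeping already done in the preceding paragraphs.
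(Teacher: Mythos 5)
Your proposal is correct and follows essentially the same route the paper takes: the paper's (implicit) proof is exactly that the $\wt{\tau}_i$ lift the standard basis of $H^0(C,\Omega^1_C)$ while the $\wt{\eta}_j$ are \v{C}ech--de Rham cocycles (via the pole analysis of $\omega_1^{(j)},\omega_2^{(j)}$ and the identity $\omega_1^{(j)}-\omega_2^{(j)}=d(y/x^j)$) projecting onto the basis $\{y/x^j\}$ of $H^1(C,\mathcal{O}_C)$, so the Hodge filtration short exact sequence gives the result. No substantive differences to report.
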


\noindent Let $j\leq 0$. Then $y/x^j$ is a polynomial in $x$ and $y$, and the
above computation shows that $d(y/x^j)$ is regular everywhere on $V$, and has
poles only above $P_\infty$. Define
\begin{align*}
    \omega_1^{(j)} & = 0,\\
    \omega_2^{(j)} & = -d(y/x^j)= -\left(\sum_{k=0}^d (k-2j)a_k x^{k-j-1}\frac{dx}{2y}\right).
\end{align*}
%Note that $\omega_2^{(j)}$ will only have positive terms of $x$ as in the case of $1 \leq j \leq g$.
For $j \geq g+1$, consider the differential form
$$\wt{\omega}^{(j)} = d(y/x^j) =  \sum_{k=0}^d (k-2j)a_k x^{k-j-1}\frac{dx}{2y}.$$
This is defined differently from the other differential forms $\omega_1^{(j)}$,
because the sum ranges from $1$ to $d$. This means that if $j<d$, there will be
some positive terms of $x$ in the sum. Since the largest power of $x$ is
$x^{d-j-1}$ (which has order $-2d+2+2j$ over $P_\infty$), and $dx/y$ has a zero
of order $2g-2$ over $P_\infty$ \footnote{In the case that $d$ is even and
there are two points over $P_\infty$, both $x$ and $dx/y$ will in fact have two
poles resp. zeroes of the same degree over each of the points over $P_\infty$,
so to prove a function is regular it suffices to prove that the total valuation
over $P_\infty$ is positive, which we denote by $(\cdot)_{P_\infty}$.} we have
$$\left(x^{d-j-1} \frac{dx}{y}\right)_{P_\infty} = 2g-2  - 2d + 2 + 2j = 2(g-d)
+ 2j = 2j - 2(d-g).$$
If $d$ is odd, $d-g = g+1$, and since $j \geq g+1$, this is regular at
$P_\infty$; hence $\wt{\omega}^{(j)}$ is a well-defined element of
$\Gamma(U,\Omega^1_C)$. If $d$ is even, $d-g = g+2$, so the only case where
this might have poles over $P_\infty$ is if $j = g+1$. But note that the
coefficient of $x^{d-j-1}$ is $d-2j$, so if $j = g+1$, $d-2j = 0$ and this term
vanishes. Thus in either case we have a well-defined element of
$\Omega^1_C(U)$.\\

\noindent Using the basis determined here, we can compute the action of
Verschiebung and Frobenius on $H^1_{dR}(C)$.

\subsection{The Verschiebung action}
To ease notation, let us redefine 
$$\wt{\tau}_i = \left(x^i\frac{dx}{y},x^i\frac{dx}{y},0\right),$$
as $i$ ranges from $1$ through $g$.\\

\noindent We kick off by noting that the Verschiebung is trivial on
$H^1(C,\mathcal{O}_C)$, simply because of Serre duality (the Frobenius is
trivial on $H^0(C,\Omega^1_C)$). Thus we only have to compute the Verschiebung
action on $\omega_1^{(j)}$ and $\omega_2^{(j)}$, for $j=1,\dots,g$. Note that
the Verschiebung takes exact forms to zero, and hence $V(\omega_1^{(j)}) =
-V(\omega_2^{(j)})$; it therefore suffices to compute $V(\omega_1^{(j)})$. This
can be written down explicitly:
\begin{align*}
    V(\omega_1^{(j)}) & =
    V\left(\sum^j_{k=0}(k-2j)a_kx^{k-j-1}\frac{dx}{2y}\right)\\
    & = \sum^j_{k=0}(k-2j)a_k^{1/p}V\left(x^{k-j-1}\frac{dx}{2y}\right).
\end{align*}
At this point, the computation is exactly that done in Lemma \ref{vaction}: we
can write
\begin{align*}
    V(\omega_1^{(j)}) & =
    \sum^j_{k=1}(k-2j)a_k^{1/p}\sum^{g}_{i=1}c^{1/p}_{ip-k+j}\frac{x^i}{2y}dx\\
    & =
    \sum_{i=1}^{g}\sum^j_{k=1}\frac{(k-2j)}{2}a_k^{1/p}c^{1/p}_{ip-k+j}\frac{x^i}{y}dx.
\end{align*}
In the basis
$(\wt{\eta}_1,\cdots,\wt{\eta}_g,\wt{\tau}_1,\cdots,\wt{\tau}_{g})$, the matrix
of Verschiebung is therefore determined by the following facts:
\begin{itemize}
    \item The coefficient of $\wt{\eta}_i$ in $V(\wt{\eta}_j)$ is zero.
    \item The coefficient of $\wt{\tau}_i$ in $V(\eta_j)$ is
	$$\sum^j_{k=0}{(k-2j)}a_k^{1/p}c^{1/p}_{ip-k+j}.$$
    \item The coefficient of $\wt{\eta}_i$ in $V(\wt{\tau}_j)$ is zero.
    \item The coefficient of $\wt{\tau}_i$ in $V(\wt{\tau}_j)$ is given by
	$c^{1/p}_{ip-j}$.
\end{itemize}
We can now graduate to a harder computation.

\subsection{The Frobenius action}
Because $d(x^p) = px^{p-1} dx = 0$, we have
$$F(\wt{\tau}_i) = 0.$$
Similarly,
$$F(\omega_1^{(j)}, \omega_2^{(j)}, y/x^j) = (0, 0, y^p x^{-pj}).$$

\noindent Write
$$f(x)^{\frac{p-1}{2}} = \sum_{k=0}^N c_k x^k,$$
where $N = d(p-1)/2$, so that
$$y^px^{-pj} = y(y^2)^{(p-1)/2}x^{-pj} = y\sum_{k=0}^N c_k x^{k-pj}.$$

\noindent If we let $i = pj-k$, then
$$F(yx^{-j}) = y^px^{-pj} = \sum_{i = pj-N}^{pj} c_{pj-i}y/x^i.$$

\noindent For all $i \in \{1, \ldots, g\}$, we subtract $c_{pj-i} \wt{\eta}_i$
from the sum to we get an element of the form
$$\left(\sum_{i=1}^g -c_{pj-i} \omega_1^{(i)}, \sum_{i=1}^g -c_{pj-i}
\omega_2^{(i)}, \sum_{i \notin \{1, \ldots, g\} } c_{pj-i}y/x^i\right).$$

\noindent Note that \emph{a priori}, we should be looking at $i \geq
\max\{pj-N, 1\}$, but if $pj-N >i$, then $pj-i >N$; so if we set $c_k = 0$ for
$k>N$, we can just subtract $c_{pj-i}\wt{\eta}_i$ for $i = 1, \ldots, g$.

All of the remaining $i$ in the sum are either less than $1$ or greater than
$g$, so we know that $(0, \omega_2^{(i)}, y/x_i)$ is a coboundary for $i \leq
0$, and that $(\wt{\omega}^{(i)}, 0, y/x_i)$ is a coboundary for $i \geq g+1$.
Working in $H^1_{dR}(C)$ --- i.e., modulo coboundaries --- we have:
$$F(\wt{\eta}_j) - \sum_{i=1}^g c_{pi-i}\wt{\eta}_i = \left( \sum_{i=1}^g
-c_{pj-i}\omega_1^{(i)} + \sum_{i = g+1}^{jp} -c_{pj-i}\wt{\omega}^{(i)},
\sum_{i \leq 0} c_{pj-i} \omega_2^{(i)}, 0\right).$$
Since this is an element of $H^1_{dR}(C)$, the difference of the differential
forms in the first and second slot must be equal to differential of the third
slot;
%in our case, this means that
%$$\sum_{i=1}^g c_{pj-i}\omega_1^{(i)} + \sum_{i = g+1}^{jp}
%c_{pj-i}\wt{\omega}^{(i)} + \sum_{i \leq 0} c_{pj-i} \omega_2^{(i)} = 0.$$
in our case, this implies that the two forms patch up to a globally defined
holomorphic differential form. (Note that this implies that all of the negative
powers of $x$ and the powers greater than $g$ cancel). We may therefore write:
$$\sum_{i=1}^g -c_{pj-i}\omega_1^{(i)} + \sum_{i = g+1}^{jp}
-c_{pj-i}\wt{\omega}^{(i)} = d_1 \frac{dx}{2y} + d_2 \frac{xdx}{2y} + \cdots +
d_g \frac{x^{g-1}dx}{2y}.$$

As both sides of the above equation are holomorphic differential forms, this is
an equality of two functions (and not just an equality in cohomology). Since
all of the $\omega_1^{(i)}$ are defined such that they have no positive powers
of $x$, the only positive powers of $x$ can come from the ${\omega'}^{(i)}$. We
write
\begin{align*}
    \sum_{i = g+1}^{jp} c_{pj-i}\wt{\omega}^{(i)}
    & = \sum_{i = g+1}^{jp} c_{pj-i} \left( \sum_{k=0}^d (k-2i)a_k
    x^{k-i-1}\right)\frac{dx}{2y}\\\
    & = \sum_{i = g+1}^{jp}  \sum_{k^\prime=-i}^{d-i} c_{pj-i}(k'-i)a_{k'+i}
    x^{k'-1}\frac{dx}{2y},
\end{align*}
where we set $k^\prime = k-i$. For a fixed $k^\prime \in \{0, \ldots, g-1\}$,
we therefore have $i \geq \max\{g+1, -k\}$.\footnote{Since $a_{i+k^\prime} = 0$
for $i > d-k^\prime$, the condition that $i\leq d-k^\prime$ is irrelevant.} If
$i \geq (j-1)p$, then $pj-i <p$; so $c_{pj-i} = 0$.\\

\noindent It follows that the coefficient of $x^{k^\prime-1}\ dx/2y$ is
$$-\sum_{i = g+1}^{d-k^\prime} c_{pj-i}a_{k^\prime+i}(k^\prime-i).$$
Thus, in the basis $(\wt{\eta}_1, \ldots, \wt{\eta}_g, \wt{\tau}_1, \ldots,
\wt{\tau}_g)$, the matrix of Frobenius is completely determined by the
following facts:
\begin{itemize}
    \item The $\wt{\eta}_i$ coefficient of $F(\wt{\eta}_j)$ is $c_{pj-i}$.
    \item The $\wt{\tau}_i$ coefficient of $F(\wt{\eta}_j)$ is
	$$\sum_{k = g+1}^{d-i} (k-i)c_{pj-k}a_{k+i}.$$
    \item The $\wt{\eta}_i$ coefficient of $F(\wt{\tau}_j)$ is zero.
    \item The $\wt{\tau}_i$ coefficient of $F(\wt{\tau}_j)$ is zero.
\end{itemize}

\section{Applications}
Using the computations in the previous section, we can compute the matrix for
the Frobenius and Verschiebung on the Jacobian of any hyperelliptic curve, as
well as the Ekedahl--Oort type. The computations done below were performed with
the use of \texttt{Magma}; the code utilized is available at
\url{https://github.com/sanathdevalapurkar/dieudonne-modules}.
\subsection{A note on the curves $y^2=x^d+1$}

\noindent From our code, we discovered an interesting pattern regarding the
$p-$rank of curves of the form $y^2=x^d+1$ modulo various primes $p$ in the
case $d$ is also a prime. They will always be ordinary in the case that $p = 1
\pmod d$, but if $p \neq 1 \pmod d$ they will often have very low $p-$rank. The
strongest result occurs when both $d$ and $g = \frac{d-1}{2}$ are prime.\\

\begin{introthm1}
    Let $d$ be an odd prime such that $g = \frac{d-1}{2}$ is also prime. Let
    $p$ be a prime not equal to $d$. Then the Jacobian of the hyperelliptic
    curve $y^2=x^d+1$ is ordinary if $p \equiv 1 \pmod d$, and has $p$-rank $0$
    if $p \not\equiv 1 \pmod d$.
\end{introthm1}
\begin{proof}
    Recall that the $p$-rank of a hyperelliptic curve is equal to the limiting
    rank of the Hasse--Witt matrix $H$, defined by $H_{i,j} = c_{pi-j}$ where
    $$\sum_{k} c_k x^k = (x^d+1)^{(p-1)/2}.$$
    Thus we have that
    $$c_{rd} = {(p-1)/2 \choose r },$$
    and $c_k$ is zero for $k$ not a multiple of $d$.\\

    \noindent Suppose $p \equiv 1 \pmod d$, and set $p = md+1$. We have that
    $$H_{i,i} = {(p-1)/2 \choose mi}.$$
    Since $i \leq g = (d-1)/2$ it is easy to check that $(p-1)/2 > mg$, and
    therefore the matrix has nonzero entries on the diagonal. Since $i, j \in
    \{1, \dots, g\}$ and $pi-j \equiv i-j \pmod d$, we can only have $d\mid
    pi-j$ if $i=j$. Thus there can be no other nonzero entries, so $H$ is an
    invertible diagonal matrix and $C$ is ordinary. \\

    \noindent Suppose $p\not\equiv 1\pmod d$. Assume that $p \equiv -1 \pmod
    d$. Then $pi-j \equiv -(i+j) \pmod d$, and since $i$ and $j$ range from $1$
    to $g$, the sum $i+j$ ranges from $2$ to $2g$ and so can never be $0$ mod
    $d$. Thus $c_{pi-j}$ is zero for all entries and the Hasse--Witt matrix is
    identically $0$, so in particular $C$ has $p$-rank $0$. \\

    \noindent The remaining case is when $p$ is neither $1$ nor $-1$ mod $d$.
    Then we claim $H$ will be nilpotent, which is equivalent to having
    characteristic polynomial $P_H(T)=T^g$. We write
    $$P_H(T) = \det(TI - H) = \sum_{\sigma \in S_n} \prod_{i=1}^g (\delta_{i,
    \sigma(i)}T-H_{i, \sigma(i)}).$$

    \noindent Since we know that $pi-i = (p-1)i$ cannot be $0$ mod $d$, we know
    that $H_{i, i} = 0$, and so the coefficient of $T^k$ will be a sum of
    products
    $\prod_{i \notin \mathrm{Fix}(\sigma)} H_{i, \sigma(i)},$
    where $\sigma$ fixes $k$ points and the product now ranges only over the
    elements not fixed by $\sigma$. To prove this equals $T^n$, it suffices to
    show that for any nontrivial cycle $(\alpha_1, \dots, \alpha_k)$, we have
    $$\prod_{i=1}^k H_{\alpha_i, \alpha_{i+1}} = 0.$$
    
    \noindent Consider such a cycle, and suppose the above product is not $0$.
    Then we have that $p\alpha_i - \alpha_{i+1} \equiv 0 \pmod d$, and
    therefore $p\alpha_i \equiv \alpha_{i+1} \pmod d$ for all $i$, and $p
    \alpha_k \equiv \alpha_i \pmod d$. Therefore $p^k \equiv 1 \pmod d$, so $p$
    must have order $k$ in $(\mathbb{Z}/d)^\times$. Since $d$ is prime, this is
    the cyclic group $\mathbb{Z}/2g$, and since $g$ is prime this means $k \in
    \{1, 2, g, 2g\}$. The cycle clearly cannot have length $2g$, and since $p
    \not\equiv 1$ or $-1$ it cannot have order $1$ or $2$, so the only
    important case is $k=g$. If the product $\prod_{i=1}^n H_{i, \sigma(i)}$
    over a cycle of length $g$ was nonzero, it would imply in particular that
    each row of $H$ has a nonzero element.
    
    \noindent But if $p \not\equiv 1 \pmod d$, this is impossible, because
    there will be some $i \in \{1, \dots, g\}$ such that $pi$ mod $d$ lies
    between $g+1$ and $2g$. Then for such an $i$, $pi-j$ will never be $0$ mod
    $d$ for $j \in \{1, \dots, g\}$, so the $i$th row of $H$ will be zero.
    Thus $H$ has characteristic polynomial $T^g$, so is nilpotent, and so $C$
    has $p$-rank $0$.\end{proof}

\subsection{Distribution of final types for low primes}
At low primes, we can count the number of final types coming from hyperelliptic
curves. The data presented below is written in the form
\begin{center}
    \texttt{<final type, number of copies>.}
\end{center}
\lstinputlisting{data-low-primes}
Our algorithm is not optimal; one should be able to write a faster algorithm to
gather more data for higher primes and larger genus.
\subsection{A question of Glass-Pries}
Let $G$ be the group scheme corresponding to the $p$-torsion of an abelian
variety of dimension $3$ with $p$-rank $0$ and $a$-number $1$. One can show
that
$$D(G) \simeq k[F,V]/(F^4, V^4, F^3-V^3),$$
where $D(G)$ is the Dieudonn\'e module of $G$. 

In \cite[Question 5.9]{glass-pries}, Glass and Pries ask the following question.
\begin{question}\label{glasspriesqn}
    Are there smooth hyperelliptic curves $C$ and $D$ of genus $4$ and $5$,
    respectively, such that
    $$\Jac(C) \simeq G\oplus(\Z/p\oplus \mu_p),\text{ and }\Jac(C') \simeq
    Q\oplus(\Z/p\oplus\mu_p)^2?$$
\end{question}
We can answer this question in the affirmative for small primes using our Magma
code. A simple linear algebra exercise allows one to compute the matrices for
the Frobenius and Verschiebung. Using our code, this allows us to determine the
canonical type:
\begin{center}
    \begin{tabular}{c|c}
	%& Verschiebung & Frobenius
	& Canonical type\\
	\hline
	\hline
	$G\oplus(\Z/p\oplus\mu_p)$
	%& $\begin{pmatrix}
	%    0 & 0 & 0 & 0 & 1 & 0 & 0 & 0\\
	%    0 & 0 & 0 & 0 & 0 & 0 & 0 & 0\\
	%    0 & 0 & 0 & 0 & 0 & 0 & 0 & 0\\
	%    0 & 0 & 0 & 0 & 0 & 0 & 0 & 0\\
	%    0 & 0 & 0 & 0 & 0 & 1 & 0 & 0\\
	%    0 & 0 & 0 & 1 & 0 & 0 & 0 & 0\\
	%    0 & 0 & 0 & 0 & 0 & 0 & 0 & 0\\
	%    0 & 0 & 0 & 0 & 0 & 0 & 0 & 0\\
	%\end{pmatrix} $ & $\begin{pmatrix}
	%    0 & 1 & 0 & 0 & 0 & 0 & 0 & 0 \\
	%    0 & 0 & 1 & 0 & 0 & 0 & 0 & 0 \\
	%    0 & 0 & 0 & 1 & 0 & 0 & 0 & 0 \\
	%    0 & 0 & 0 & 0 & 0 & 0 & 0 & 0 \\
	%    0 & 0 & 0 & 0 & 0 & 0 & 0 & 0 \\
	%    0 & 0 & 0 & 0 & 0 & 0 & 0 & 0 \\
	%    0 & 0 & 0 & 0 & 0 & 0 & 0 & 0 \\
	%    0 & 0 & 0 & 0 & 0 & 0 & 0 & 1 \\
	%\end{pmatrix}$
	& $[1, 1, 2, 3]$\\
	\hline
	$G\oplus(\Z/p\oplus\mu_p)^2$
	%& $\begin{pmatrix}
	%    0 & 1 & 0 & 0 & 0 & 0 & 0 & 0 & 0 & 0\\
	%    0 & 0 & 1 & 0 & 0 & 0 & 0 & 0 & 0 & 0\\
	%    0 & 0 & 0 & 1 & 0 & 0 & 0 & 0 & 0 & 0\\
	%    0 & 0 & 0 & 0 & 0 & 0 & 0 & 0 & 0 & 0\\
	%    0 & 0 & 0 & 0 & 0 & 0 & 0 & 0 & 0 & 0\\
	%    0 & 0 & 0 & 0 & 0 & 0 & 0 & 0 & 0 & 0\\
	%    0 & 0 & 0 & 0 & 0 & 0 & 0 & 0 & 0 & 0\\
	%    0 & 0 & 0 & 0 & 0 & 0 & 0 & 1 & 0 & 0\\
	%    0 & 0 & 0 & 0 & 0 & 0 & 0 & 0 & 0 & 0\\
	%    0 & 0 & 0 & 0 & 0 & 0 & 0 & 0 & 0 & 1\\
	%\end{pmatrix}$ & $\begin{pmatrix}
	%    0 & 0 & 0 & 0 & 1 & 0 & 0 & 0 & 0 & 0\\
	%    0 & 0 & 0 & 0 & 0 & 0 & 0 & 0 & 0 & 0\\
	%    0 & 0 & 0 & 0 & 0 & 0 & 0 & 0 & 0 & 0\\
	%    0 & 0 & 0 & 0 & 0 & 0 & 0 & 0 & 0 & 0\\
	%    0 & 0 & 0 & 0 & 0 & 1 & 0 & 0 & 0 & 0\\
	%    0 & 0 & 0 & 1 & 0 & 0 & 0 & 0 & 0 & 0\\
	%    0 & 0 & 0 & 0 & 0 & 0 & 0 & 0 & 0 & 0\\
	%    0 & 0 & 0 & 0 & 0 & 0 & 0 & 0 & 0 & 0\\
	%    0 & 0 & 0 & 0 & 0 & 0 & 0 & 0 & 0 & 0\\
	%    0 & 0 & 0 & 0 & 0 & 0 & 0 & 0 & 0 & 0\\
	%\end{pmatrix}$
	& $[1,2,2,3,4]$\\
    \end{tabular}
\end{center}
By running through all possible (smooth) hyperelliptic curves, and comparing
the associated canonical types, we can find the desired curves. For genus $4$,
we find that the following hyperelliptic curves answer Question
\ref{glasspriesqn}, thus proving Theorem \ref{thm2}:
\begin{itemize}
    \item At $p=3$: the curve
	$$y^2 = 2x^9 + x^8 + x.$$
    \item At $p=5$: the curve
	$$y^2 = x^9 + x^8 + x.$$
    \item At $p=7$: the curve
	$$y^2 = 5x^9 + 3x^7 + x^6 + x.$$
\end{itemize}
For genus $5$, we find that the following hyperelliptic curves answer Question
\ref{glasspriesqn}:
\begin{itemize}
    \item At $p=3$: the curve
	$$y^2 = 2x^{11} + x^9 + x^7 + x.$$
    \item At $p=5$: the curve
	$$y^2 = 3x^{11} + 2x^{10} + 4x^9 + 2x^8 + x^7 + x.$$
\end{itemize}

\bibliographystyle{alpha}
\bibliography{main}

\begin{thebibliography}{{Oda}69}

\bibitem[{Cor}05]{glass-pries}
{Cornelissen, G. and Oort, F. and Chinburg, T. and Gasbarri, C. and Glass, D.
  and Lehr, C. and Matignon, M. and Pries, R. and Wewers, S.}
\newblock {Problems from the {W}orkshop on {A}utomorphisms of {C}urves}.
\newblock {\em {Rend. Sem. Mat. Univ. Padova}}, 113:129--177, 2005.

\bibitem[EP13]{elkin-pries}
A.~{Elkin} and R.~{Pries}.
\newblock {Ekedahl--Oort strata of hyperelliptic curves in characteristic $2$}.
\newblock {\em Algebra and Number Theory}, 7(3):507--532, 2013.

\bibitem[{Oda}69]{oda}
T.~{Oda}.
\newblock {The first de Rham cohomology group and Dieudonn\'e modules}.
\newblock {\em Ann. Sci. Ecole Norm. Sup.}, 4(2):63--135, 1969.

\bibitem[{Poo}06]{poonen}
B.~{Poonen}.
\newblock {Lectures on rational points on curves}.
\newblock \url{http://math.mit.edu/~poonen/papers/curves.pdf}, March 2006.

\bibitem[Sil09]{silverman}
Joseph~H. Silverman.
\newblock {\em The arithmetic of elliptic curves}, volume 106 of {\em Graduate
  Texts in Mathematics}.
\newblock Springer, Dordrecht, second edition, 2009.

\bibitem[{Yui}78]{yui}
N.~{Yui}.
\newblock {On the Jacobian varieties of hyperelliptic curves over fields of
  characteristic $p>2$}.
\newblock {\em J. Algebra}, 52(2):378--410, 1978.

\end{thebibliography}
\end{document}